\newtheorem{theorem}{Theorem}
\newtheorem{corollary}[theorem]{Corollary}
\newtheorem{remark}[theorem]{Remark}
\newtheorem{problem}{Problem}
\newtheorem{question}[problem]{Question}
\newcommand{\be}{\begin{eqnarray}}
\newcommand{\ee}{\end{eqnarray}}
\newcommand{\beq}{\begin{equation}}
\newcommand{\eeq}{\end{equation}}
\newcommand{\benum}{\begin{enumerate}}
\newcommand{\eenum}{\end{enumerate}}
\newcommand{\bal}{\begin{align*}}
\newcommand{\eal}{\end{align*}}
\newcommand{\ba}{\begin{array}}
\newcommand{\ea}{\end{array}}
\newcommand{\Z}{\ensuremath{\mathbb Z}}
\newcommand{\N}{\ensuremath{\mathbb N}}
\title{Monochromatic paths for the integers}
\author[1]{Jo\~{a}o Guerreiro}
\author[2]{Imre Z. Ruzsa\thanks{Author was supported by ERC--AdG Grant No.321104  and
 Hungarian National Foundation for Scientific
 Research (OTKA), Grants No.109789, % enym
 and NK104183.}}
\author[3]{Manuel Silva}
\affil[1]{Department of Mathematics, Columbia University,
New York, United States of America}
\affil[2]{Alfr\'{e}d R\'{e}nyi Institute of Mathematics, Hungarian Academy of Sciences, Budapest, Hungary}
\affil[3]{Departamento de Matem\'{a}tica, Universidade Nova de Lisboa, Caparica, Portugal}
\date{}
\begin{document}

\maketitle

\begin{abstract} Recall that van der Waerden's theorem states that any finite coloring
of the naturals has arbitrarily long monochromatic arithmetic sequences.
We explore questions about the set of differences of those sequences.
\end{abstract}

\bigskip

\section{Introduction}

A famous result of van der Waerden asserts that any finite coloring of the positive integers contains arbitrarily long monochromatic arithmetic progressions. In a typical \emph{Ramsey problem}, we try to prove that some particular type of structure cannot be avoided. In this article we will consider arithmetic progressions with common difference in some given set $L$, and sets whose consecutive differences belong to a fixed set.

We call a set $L \subset \N$ an $r-$\emph{ladder} if for any $r-$coloring $\alpha : \mathbb{N} \to [1,r]$ there are arbitrarily long monochromatic arithmetic progressions with common difference $r \in L$. If a set $L \subseteq \N$ is an $r-$ladder for all positive integers $r$ then it is called a \emph{ladder}. The set of positive integers $\N$ is an example of a ladder. The following related important open question has been proposed in \cite{BGL}.

\begin{problem}[open]
Is every $2$-ladder set necessarily an $r$-ladder for every $r>2$?
\end{problem}

It is not even known if the $r$-ladder property implies $(r+1)$-ladder property for any $r \geq 2$.

\bigskip

The following regularity property appears also in \cite{BGL}.

\begin{theorem}
If $L = L_1 \cup L_2$ is a ladder, then either $L_1$ or $L_2$ is a ladder.
If $L$ is a ladder, then $L \cap n \N$ is a ladder for any $n \in \N$.
\end{theorem}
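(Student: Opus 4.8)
The plan is to prove both assertions through a single device: combining or refining colorings by taking products. For the union statement I would argue by contraposition, and for the intersection statement I would refine an arbitrary coloring by appending residue information modulo $n$.

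For the first claim, suppose that neither $L_1$ nor $L_2$ is a ladder. Then there exist integers $r_1, r_2$, colorings $\alpha_i : \N \to [1,r_i]$ for $i=1,2$, and bounds $\ell_i$ such that every progression monochromatic under $\alpha_i$ whose common difference lies in $L_i$ has length at most $\ell_i$. I would then form the product coloring $\beta : \N \to [1,r_1] \times [1,r_2]$ given by $\beta(m) = (\alpha_1(m), \alpha_2(m))$, which uses $r_1 r_2$ colors. The key observation is that a progression monochromatic under $\beta$ is simultaneously monochromatic under $\alpha_1$ and under $\alpha_2$; hence if its common difference $d$ lies in $L = L_1 \cup L_2$, then $d \in L_1$ forces length at most $\ell_1$ and $d \in L_2$ forces length at most $\ell_2$. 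So every such progression has length at most $\max(\ell_1, \ell_2)$, showing $L$ is not an $(r_1 r_2)$-ladder and contradicting that $L$ is a ladder.

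For the second claim, fix $n$ and an arbitrary integer $r$; given any $r$-coloring $\alpha : \N \to [1,r]$, I must produce arbitrarily long $\alpha$-monochromatic progressions with common difference in $L \cap n\N$. I would refine $\alpha$ by recording the residue modulo $n$, defining $\gamma : \N \to [1,r] \times \{0,1,\dots,n-1\}$ by $\gamma(m) = (\alpha(m),\, m \bmod n)$, an $(rn)$-coloring. Since $L$ is a ladder it is in particular an $(rn)$-ladder, so there are arbitrarily long progressions monochromatic under $\gamma$ with common difference $d \in L$. The point is that all terms of such a progression lie in one residue class modulo $n$; comparing two consecutive terms forces $n \mid d$, so in fact $d \in L \cap n\N$, while the first coordinate of $\gamma$ guarantees $\alpha$-monochromaticity. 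As $r$ was arbitrary, $L \cap n\N$ is a ladder.

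I do not expect a serious obstacle once the product construction is in hand; the only points requiring care are the bookkeeping of color counts (the product of two palettes has size $r_1 r_2$, and appending residues modulo $n$ multiplies the palette by $n$) and the elementary observation that a monochromatic progression of length at least two inside a single residue class modulo $n$ must have difference divisible by $n$. I expect the heart of the matter is simply recognizing that being monochromatic for a product coloring means being monochromatic for each factor simultaneously, which is precisely what lets the ladder hypothesis on $L$ be transferred to $L_1$, $L_2$, and $L \cap n\N$.
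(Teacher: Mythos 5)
Your proposal is correct and follows essentially the same route as the paper: the union statement via the Cartesian (product) coloring built from two witnesses of failure, and the intersection statement by refining an arbitrary coloring with residues modulo $n$. The paper's proof is just a terser version of yours; you have merely made explicit the bookkeeping (color counts, the divisibility $n \mid d$) that the paper leaves implicit.
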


\begin{proof}
To prove the first statement, suppose neither $L_1$ nor $L_2$ is a ladder. Then there is a finite coloring of the positive integers for which there are no monochromatic arithmetic progressions with say $k$ elements with difference in $L_1$, and a similar coloring for the set $L_2$.  The Cartesian product of these two colorings will avoid long monochromatic arithmetic progressions with  difference in $L$.
To show the second statement, assume that for any finite coloring of the positive integers there are arbitrarily long arithmetic progressions with difference in $L$. Further assume that elements in different classes modulo $n$ always have different colors (we can always refine the coloring to make this true), then we will have arbitrarily long arithmetic progressions with difference in $L \cap n \N$.
\end{proof}

\begin{remark}
From the previous theorem we can conclude that $n \N$ is a ladder for any $n \in \N$ and that nonzero classes modulo $n$ are not ladders. Note as well that $L$ is a ladder if and only if the complement of $L$ is not a ladder. In particular, all cofinite sets are ladders and finite sets are not.
\end{remark}

The best way to construct non trivial examples of ladders is using the polynomial van der Waerden theorem.

\begin{theorem}[Polynomial van der Waerden]
Let $p_1,\cdots,p_k$ be polynomials with integer coefficients and no constant term. Then for any $r$-coloring of $\N$ there exist $a$, $d$ such that $a, a+p_1(d),\cdots, a+p_k(d)$ are the same color.
\end{theorem}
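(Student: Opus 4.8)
The plan is to prove this as a consequence of the structure of van der Waerden's theorem itself, via a combinatorial induction on the complexity of the polynomial system $\{p_1,\dots,p_k\}$; this is essentially the route of Bergelson and Leibman, who first gave an ergodic-theoretic proof, and of Walters, who gave a purely combinatorial one, and I would follow the latter. First I would pass to the finite form: by a standard compactness argument it suffices to produce, for each family $p_1,\dots,p_k$ and each $r$, an integer $N=N(p_1,\dots,p_k,r)$ so that every $r$-coloring of $[1,N]$ contains $a, a+p_1(d),\dots,a+p_k(d)$ monochromatic with $d \ge 1$. Working in a finite interval lets me run a finite focusing argument and take iterated products of colorings without worrying about limits.

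The backbone is a double induction. The outer induction is on a well-founded ``PET'' ordering of polynomial systems, measuring roughly the multiset of degrees of the $p_i$ together with the degrees of their pairwise differences; the crucial point is that a suitable differencing (van der Corput) operation sends a system to one that is strictly smaller in this order. The base case is the family of linear polynomials $p_i(d)=c_i d$: a sufficiently long monochromatic arithmetic progression with common difference $d$, supplied by van der Waerden's theorem, automatically contains $a, a+c_1 d,\dots,a+c_k d$, so the linear case is exactly van der Waerden. The inner induction, for a fixed system, is the classical color-focusing induction on the number of colors $r$.

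For the inductive step I would imitate van der Waerden's focusing. Fix $S=\{p_1,\dots,p_k\}$ and assume the result for every system strictly below it in the PET order and, within $S$, for colorings using fewer focused ``fans.'' The van der Corput operation is the engine: choosing a lowest member $p_1$ and an auxiliary shift $h$, one passes to the derived system $\{p_i(x+h)-p_1(x)\}$ together with the reduced differences $\{p_i(x)-p_1(x)\}$, which sits strictly lower in the PET order. By the outer induction this derived system is partition-regular, so for many base points simultaneously it yields short monochromatic configurations; translating these along a common increment $d$ produces a bounded fan of almost-complete configurations for $S$ that share a single terminal point but carry distinct colors. Once the fan has more than $r$ members two must agree in color and combine to close up a genuine monochromatic copy of $a, a+p_1(d),\dots,a+p_k(d)$; sweeping the focusing across all $r$ color classes, exactly as for van der Waerden, finishes the step.

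The main obstacle, and the genuinely new content beyond van der Waerden, is organizing the differencing so that it truly decreases a well-founded complexity while keeping all the shifted sub-configurations aligned on the \emph{same} increment $d$; this is the PET bookkeeping. One must verify that subtracting $p_1$ and shifting causes the leading terms to cancel in a way that lowers degrees or reduces the number of top-degree polynomials, so that the derived system drops in the chosen order, and that the finite fan of focused configurations can be assembled inside a single interval $[1,N]$ with its constituent differences nested correctly. Making the ordering well-founded and checking the decrease for the general non-homogeneous, mixed-degree family is where essentially all the difficulty lies; the color-focusing and compactness parts are routine adaptations of the classical argument.
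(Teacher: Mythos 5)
The paper itself does not prove this theorem: it is quoted as a known result, with the reader pointed to Bergelson--Leibman \cite{BL} for the original ergodic-theoretic proof and to Walters \cite{W} for an elementary combinatorial one. Your proposal is an outline of exactly the Walters route (finitization by compactness, PET ordering on polynomial systems, van der Corput differencing, color focusing), so your approach coincides with the paper's cited source rather than with anything in the paper.

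As a standalone proof, however, what you have written is a roadmap, not an argument. Every step that is genuinely hard is named and then deferred: you never define the PET ordering precisely, never prove it is well-founded, and never verify that the differencing operation $p_i(x+h)-p_1(x)$ applied to a general non-homogeneous, mixed-degree family produces a system strictly lower in that ordering --- and you say yourself that this is ``where essentially all the difficulty lies.'' The color-focusing step is also stated only impressionistically: the claim that the derived system yields, ``for many base points simultaneously,'' a fan of almost-complete configurations sharing a terminal point and carrying distinct colors is precisely the delicate quantitative heart of Walters's induction, and it needs the careful nesting of intervals and increments that you gesture at but do not set up. So the verdict is: right strategy, consistent with the reference the paper relies on, but the proposal as written contains no proof of the decisive lemmas; to complete it you would have to carry out the PET bookkeeping in full, at which point you would essentially have reproduced \cite{W}.
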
 

See \cite{BL} and \cite{W}, the latter for an elementary proof.

\begin{corollary}
Let $p$ be a polynomial with integer coefficients and no constant term. Then for any $r$-coloring of $\N$ and any $k \geq 1$ there exists $a$, $d$ such that $a, a+p(d),\cdots, a+k p(d)$ are the same color.
\end{corollary}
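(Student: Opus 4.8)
The plan is to derive this directly from the Polynomial van der Waerden theorem by choosing the right family of polynomials. First I would set $p_i(x) = i\, p(x)$ for each $i \in \{1, \ldots, k\}$. Since $p$ has integer coefficients and vanishes at $0$, each $p_i$ again has integer coefficients and satisfies $p_i(0) = 0$, so the family $p_1, \ldots, p_k$ meets the hypotheses of the theorem. This specialization is the whole idea: a $k$-term progression with common ``polynomial step'' $p(d)$ is exactly what you get when the $k$ prescribed polynomials are the successive multiples of a single one.

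Next I would apply the Polynomial van der Waerden theorem to this family together with the given $r$-coloring. It produces integers $a$ and $d$ for which $a, a + p_1(d), \ldots, a + p_k(d)$ all receive the same color. Substituting $p_i(d) = i\, p(d)$, these are precisely the $k+1$ terms $a, a + p(d), a + 2p(d), \ldots, a + k\, p(d)$, which is the desired conclusion.

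There is essentially no obstacle here beyond identifying the correct specialization: all the combinatorial content is carried by the quoted theorem, and the only thing to check is the (routine) fact that the property ``integer coefficients and no constant term'' is preserved under multiplication by the integer $i$. I would only add the remark that the statement as phrased asks merely for the $k+1$ terms to share a color and does not insist that $p(d) \neq 0$, so no further argument about nondegeneracy of the progression is needed; the theorem applies verbatim.
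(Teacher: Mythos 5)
Your proposal is correct and matches the paper's proof exactly: both take $p_i = i\,p$ for $1 \leq i \leq k$ and apply the Polynomial van der Waerden theorem to this family. The extra checks you record (closure of the hypotheses under multiplication by $i$, no need for nondegeneracy) are routine and consistent with what the paper leaves implicit.
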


\begin{proof}
This follows from the polynomial van der Waerden by choosing $p_i = i p$, $1 \leq i \leq k$.
\end{proof}

In particular, if $p$ is a polynomial with integer coefficients and satisfying the condition $p(0) = 0$ then the set $|p(\N)| = p(\N) \cap \N$  is a ladder. 

\begin{question}
Is there a ladder which does not contain a set of the form $|p(\N)|$ for some polynomial $p$ as above?
\end{question}

\bigskip

\section{An elementary construction of sets which are not ladders}

In this section we construct a family of sets that are not ladders. We shall discuss more properties of this family of sets in the following section.

\smallskip

\begin{theorem} \label{construction}
Let $S = \left\{ m^{2k-1} : k \in \N \right\}$. For $m \geq 5$  the set $S-S$ is not a $3$-ladder.
\end{theorem}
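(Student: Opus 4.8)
The plan is to exhibit a single $3$-coloring $c:\N\to\{0,1,2\}$ for which every monochromatic arithmetic progression with common difference in $S-S$ has length bounded by an absolute constant; by the definition of a ladder this shows $S-S$ is not a $3$-ladder. The whole argument is governed by the base-$m$ digit structure of the differences. Note first that a purely modular coloring cannot succeed: writing a positive difference as $d=m^{2j-1}-m^{2i-1}=m^{2i-1}\bigl(m^{2(j-i)}-1\bigr)$ shows $d$ is divisible by $m^{2}-1$, so $S-S\subseteq (m^{2}-1)\Z$ and $n\bmod(m^{2}-1)$ is constant along every such progression. The coloring must therefore be genuinely positional. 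Expanding $m^{2(j-i)}-1$ as a string of digits $m-1$, one sees that in base $m$ the number $d$ is exactly a block of digits all equal to $m-1$ occupying positions $2i-1$ through $2j-2$, and zero elsewhere. Thus the nonzero part of any element of $S-S$ \emph{starts at an odd position and ends at an even position}; this parity constraint, inherited from using only odd exponents in $S$, is the feature the coloring must exploit.

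Second, I would analyze the effect of adding such a $d$. Since $d\equiv 0\pmod{m^{2i-1}}$, the digits of $n$ below position $2i-1$ are frozen along the entire progression $n,n+d,n+2d,\dots$. Inside the block a short carry computation shows that adding $d$ sends the \emph{top part} $H(n):=\lfloor n/m^{2j-1}\rfloor$ to $H(n)+1$, with the single exception of the first step, which occurs only when all block digits of $n$ vanish. Indeed, after one step the bottom of the block is filled with digits $m-1$, a carry then necessarily escapes the top of the block, and from that point on $H$ increases by exactly $1$ at each step. Because the scale $2j-1$ is odd, the quantity $H\bmod 3$ therefore behaves like a clean counter along the progression.

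The remaining, and main, task is to manufacture a $3$-coloring that reads off this counter simultaneously at every odd scale, \emph{without} knowing $i$ and $j$ in advance. The idea is to define $c(n)$ from the base-$m$ digits so that incrementing the top part above any odd position always changes the color; consecutive terms of any progression with difference in $S-S$ would then receive distinct colors past the first step, forcing monochromatic runs to have length at most a small constant. This is where the hypothesis $m\ge 5$ enters: to survive the carries that the block generates internally and out of its top, I would partition the digit range $\{0,\dots,m-1\}$ into three bands separated by guard values and read the color from a digit at the relevant odd position, so that four or five available digit values supply three bands together with the one-unit margin needed to absorb a carry. I expect the genuine difficulty to be exactly this combination of scale-invariance and carry-robustness: one must verify that the carry occasionally escaping the block, together with the first-step anomaly, cannot conspire to produce long monochromatic progressions, and that the parity mismatch between the odd bottom and the even top of every block is what prevents the frozen low-order digits from masking the counter. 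Controlling these carries uniformly over all $i,j$ is the crux of the proof.
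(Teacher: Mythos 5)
Your preliminary observations are correct: the base-$m$ expansion of $d=m^{2j-1}-m^{2i-1}$ is a block of digits $m-1$ occupying positions $2i-1$ through $2j-2$, the digits below the block are frozen along the progression, and the top part $H(n)=\lfloor n/m^{2j-1}\rfloor$ increments by $1$ at every step except the (at most one per $m^{2(j-i)}$ consecutive steps) step where the block digits of the current term all vanish. But the proof stops exactly where the theorem begins: no $3$-coloring is ever defined, nothing is verified, and you yourself flag the construction as ``the crux of the proof.'' As written, this is a plan, not a proof.

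Worse, the property your plan aims for is provably unattainable, so no choice of ``bands and guard digits'' can realize it. You want a coloring $c$ such that adding any $d\in S-S$ changes the color at every non-anomalous step, i.e.\ $c(n+d)\neq c(n)$ whenever the block digits of $n$ are not all zero. Apply this with $n=m^{2k-1}$ and $d=m^{2j-1}-m^{2k-1}$: the digit of $n$ at position $2k-1$ is $1$, so the block is nonzero, and your requirement forces $c\bigl(m^{2j-1}\bigr)\neq c\bigl(m^{2k-1}\bigr)$ for all $j>k$; that is, all elements of $S$ receive pairwise distinct colors, which is impossible for any finite coloring. (This reflects a general obstruction: $S-S$ is the difference set of an infinite set, so by pigeonhole every finite coloring admits monochromatic pairs, indeed infinite monochromatic $S-S$-walks inside $S$; one can never make the color change along every edge.) The paper therefore aims at something strictly weaker and still sufficient: it exhibits the concrete coloring $\alpha(n)=\left|\left\{i: f_{2i}(n)=2\right\}\right| \bmod 3$, where $f_i(n)$ is the $i$-th base-$m$ digit (so the color is the count, mod $3$, of digits equal to $2$ in even positions), and shows that a monochromatic progression with difference $d$ cannot have length exceeding $m^2+m+1$. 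The mechanism is local rather than a global counter: among any $m^2$ consecutive terms the digit pair at positions $(2k,2k-1)$ runs through all $m^2$ values, so some term $x_t$ has pair $(3,0)$; a case analysis on the two digits $\bigl(f_{2j}(x_t),f_{2j-1}(x_t)\bigr)$ just above the block --- split according to whether each is $2$, $m-1$, or neither --- then produces two terms among $x_t,x_{t+1},x_{t+m},x_{t+m+1}$ whose counts of $2$'s at even positions differ by $1$ or $2$ modulo $3$, hence have different colors. (This case analysis is also where $m\geq 5$ is genuinely used, e.g.\ the digit $4$ must exist.) To repair your argument you would have to replace ``the color changes at every step'' by a bounded-window statement of this kind, and that is precisely the step your proposal leaves open.
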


\begin{proof}
Let $f_i(n)$ be the $i$-th digit in the base $m$ representation of $n$.
Consider the following  coloring of the positive integers with 3 colors:
\[ \alpha(n) = \left|\left\{i : f_{2i}(n) = 2\right\}\right| \bmod 3. \]

Let $\left\{ x_1, x_2, \ldots, x_n \right\}$ be an $n$-term monochromatic arithmetic progression with difference $d = m^{2j-1} - m^{2k-1}$.
Suppose that $n > m^2+m+1$. Notice that 
\[ \left\{ \big(f_{2k}(x_s), f_{2k-1}(x_s) \big) : 1 \leq s \leq m^2 \right\} = \big\{0,\ldots,m-1 \big\}^2.\]

Take $1 \leq t \leq m^2$ such that
 \[ \big(f_{2k}(x_{t}), f_{2k-1}(x_{t})\big) = (3,0). \]
We can calculate the values of $\big(f_{2k}(x_{t+i}), f_{2k-1}(x_{t+i})\big)$ for $0\leq i\leq m+1$; in particular,
\begin{align*}
   \big(f_{2k}(x_{t+1}), f_{2k-1}(x_{t+1})\big) &= (2, m-1), \\
 \big(f_{2k}(x_{t+m}), f_{2k-1}(x_{t+m})\big) &= (2,0), \\
\big(f_{2k}(x_{t+m+1}), f_{2k-1}(x_{t+m+1})\big) &= (1,m-1).  \\
\end{align*}
The digits at positions $< 2k-1$ do not change, neither those at positions in $(2k+1, 2j-2)$. The behavior of the
highest digits depends on the occurrence of carries.

If $ f_{2j-1}(x_t)$ is neither 2 nor $m-1$, then we get that  $x_{t}$ and $x_{t+1}$ are of different color.
Similarly, if $ f_{2j}(x_t)$ is neither 2 nor $m-1$, then we get that  $x_{t}$ and $x_{t+m}$ are of different color.
If  $\big(f_{2j}(x_{t}), f_{2j-1}(x_{t})\big) = (2, 2)$ or $(2, m-1)$, we can similarly compare  $x_{t}$ and $x_{t+m+1}$.

Finally, if $\big(f_{2j}(x_{t}), f_{2j-1}(x_{t})\big)= (m-1,m-1)$ or $(m-1,2)$, we can compare  $x_{t+m}$ and $x_{t+m+1}$.

In the first case we have 
\begin{align*}
\big(f_{2j}(x_{t+m}), f_{2j-1}(x_{t+m})\big) &= (0,m-1),\\
\big(f_{2j}(x_{t+m+1}), f_{2j-1}(x_{t+m+1})\big) &= (1,0),
\end{align*}
 while all other base $m$ digits remain unchanged. Therefore, $x_{t+m}$ and $x_{t+m+1}$ have different colours which contradicts the hypothesis.

In the second case, we have 
\begin{align*}
\big(f_{2j}(x_{t+m}), f_{2j-1}(x_{t+m})\big) &= (0,2),\\
\big(f_{2j}(x_{t+m+1}), f_{2j-1}(x_{t+m+1})\big) &= (0,3),
\end{align*}
 while all other base $m$ digits remain unchanged. Therefore, $x_{t+m}$ and $x_{t+m+1}$ have different colours which contradicts the hypothesis.

We conclude that $S-S$ is not $3$-ladder
\end{proof}

\section{On intersective and accessible sets}

% The notions of \emph{accessible} and \emph{chromatic intersective} show up in the literature

% ****************** Give reference! **********************

 % as being distinct concepts. We  prove that  they are equivalent.

We call a set $S \subseteq \N$ \emph{accessible} if for any finite coloring $\alpha: \N \to [1,r]$ there are arbitrarily long
monochromatic sets $A = \{ a_1, a_2, \ldots, a_n\}$ such that $a_{i+1} - a_i \in S$. We will say that $A$ is a monochromatic $S$-sequence. The notion of accessible set is defined for example in \cite{LR2}.

Any ladder set is clearly accessible, and so is the
 difference set $L=A-A$ of any infinite set $A \subseteq \mathbb{N}$, or any set that contains arbitrarily large finite difference sets, \cite[Theorem 10.27]{LR1}. 

A set $S \subseteq \N$ is chromatically intersective if for any finite partition of $$\N=A_1 \cup \cdots \cup A_d,$$ there is $1 \leq i \leq d$ such that $$(A_i-A_i) \cap S \neq \emptyset.$$

It is clear that any accessible set is also chromatically intersective. We now prove the converse.

\begin{theorem}\label{chrom-acc}
Every chromatically intersective set $S \subset \N$ is also accessible.
\end{theorem}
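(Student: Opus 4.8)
The plan is to prove the contrapositive: if $S$ is not accessible, then it is not chromatically intersective. First I would unwind what non-accessibility means for a fixed witness. Saying that $S$ fails to be accessible gives us a single finite coloring $\alpha : \N \to [1,r]$ together with a threshold $n$ for which \emph{no} monochromatic $S$-sequence has $n$ terms; equivalently, every monochromatic $S$-sequence with respect to $\alpha$ has at most $n-1$ terms. The goal is then to convert this one bad coloring $\alpha$ into a (still finite) coloring $\beta$ witnessing the failure of chromatic intersectivity, that is, a coloring no two of whose equally-colored points differ by an element of $S$.

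The key device is a \emph{level function}. For each $x \in \N$ let $\ell(x)$ denote the maximal number of terms in a monochromatic $S$-sequence, in the color $\alpha(x)$, whose largest element is $x$. Because the elements of $S$ are positive, every $S$-sequence is strictly increasing, so this ``longest $S$-sequence ending at $x$'' is well defined; the singleton $\{x\}$ gives $\ell(x) \geq 1$, and the assumed threshold gives $\ell(x) \leq n-1$. Thus $\ell$ takes values in the finite set $\{1,\ldots,n-1\}$, and I would define the refined coloring
\[
  \beta(x) = \big(\alpha(x),\, \ell(x)\big) \in [1,r] \times \{1,\ldots,n-1\},
\]
which uses only $r(n-1)$ colors and hence is a legitimate finite partition of $\N$.

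The heart of the argument is the monotonicity of $\ell$ along $S$-steps within a color class. Suppose $x < y$ with $y - x \in S$ and $\alpha(x) = \alpha(y)$. Taking a maximal monochromatic $S$-sequence ending at $x$, of length $\ell(x)$, and appending $y$ (legal since $y - x \in S$ and $y$ carries the same color) produces a monochromatic $S$-sequence ending at $y$ of length $\ell(x)+1$, whence $\ell(y) \geq \ell(x)+1 > \ell(x)$. Consequently $\beta(x) \neq \beta(y)$ whenever $y - x \in S$, so no color class $B_i$ of $\beta$ contains two points whose difference lies in $S$; that is, $(B_i - B_i) \cap S = \emptyset$ for every $i$. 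This exactly contradicts chromatic intersectivity and closes the contrapositive.

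I do not expect a genuine obstacle here; the only points requiring care are the bookkeeping that makes $\ell$ well defined and bounded --- namely that non-accessibility really supplies a \emph{uniform} finite bound $n-1$ on the lengths of monochromatic $S$-sequences for the fixed $\alpha$, not a varying one --- and the verification that the appended sequence in the monotonicity step is genuinely monochromatic. Both are immediate from the definitions, so the passage from $\alpha$ to $\beta$ through the level function should carry the entire proof.
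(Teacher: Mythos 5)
Your proof is correct, and it takes a genuinely different route from the paper's proof of this theorem. The paper argues by a minimal-counterexample descent: assuming accessibility fails, it fixes a coloring whose maximal monochromatic $S$-sequence length $k \geq 2$ is minimal over all finite colorings, observes via a concatenation argument that the tail of a maximal sequence can never occur as an interior term of another maximal sequence, and then recolors just those tails with $r$ fresh colors to produce a coloring whose maximal length is strictly smaller --- contradicting minimality of $k$, or, once the length is forced down to $1$, contradicting chromatic intersectivity. Your argument replaces this iteration with a single refinement: the level function $\ell(x)$ is uniformly bounded (this is exactly where the uniform threshold supplied by non-accessibility enters), strictly increases along an $S$-step within an $\alpha$-color class, and so the product coloring $\beta=\big(\alpha,\ell\big)$ has no color class whose difference set meets $S$. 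That is cleaner --- no minimality bookkeeping, no tail analysis, one recoloring instead of a descent --- and it is also the more generalizable idea: it is precisely the rank-function argument the paper itself uses later (Section 5) to prove the extension to arbitrary graphs of infinite chromatic number with an ordered vertex set, where the coloring $\gamma(x)=\alpha(x)+d\beta(x)$ there encodes the same pair of data as your $\big(\alpha(x),\ell(x)\big)$. In effect you have proved Theorem~\ref{chrom-acc} by the method of the paper's Section 5 theorem rather than by the paper's own proof of it.
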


\begin{proof}
We need to prove that for any finite coloring there are arbitrarily long monochromatic sequences whose consecutive differences belong to the set $S$. Suppose this is not true. Let $k \geq 2$ be the maximal length over all monochromatic $S$-difference sequences for some particular $r$-coloring $\alpha: \N \to [1,r]$. We may assume that this $k$ is minimal over all finite colorings. Given any two monochromatic maximal $S$-difference sequences $x_1,x_2,\cdots,x_k$ and $y_1,y_2,\cdots,y_k$, we know that if $x_k=y_i$ then $i=k$, otherwise we could get a monochromatic $S$-difference sequence with length larger than $k$ by concatenating these sequences.. We construct a new finite coloring $\beta: \N \to [1,2r]$ by changing only each \emph{tail} $x_k$ in every maximal monochromatic $S$-difference sequence by $\beta(x_k)=d+\alpha(x_k)$. For this new coloring the length of the maximal monochromatic $S$-sequences is less than $k$. This concludes the proof.
\end{proof}

% \begin{corollary}
% If $L = L_1 \cup L_2$ is chromatically intersective, then $L_1$ is chromatically intersective or $L_2$ is chromatically intersective.
% \end{corollary}

% \pf
% Suppose $L_1$ and $L_2$ are not chromatically intersective. Then, there exist partitions \[ \N = A_1 \cup \cdots \cup A_d \] \[\N = B_1 \cup \cdots \cup B_e \] such that
% \[ (A_i - A_i) \cap L_1 = \emptyset, \ 1 \leq i \leq d \]
% \[(B_i - B_i) \cap L_2 = \emptyset, \ 1 \leq i \leq e. \]
% The partition $\N = C_{11} \cup \cdots \cup C_{de}$ where $C_{ij} = A_i \cap B_j$ has the property that $(C_{ij} - C_{ij}) \cap L = \emptyset$ for $1 \leq i \leq d$, $1 \leq j \leq e$.%  This implies that $L$ is not chromatically intersective which contradicts the hypothesis.

%*************** I deleted the ``corollary'' about splitting a chromatically intersective set. It is not a corollary to
%anything; it is obvious and well known to anybody whoever worked on it; and we do not need it. 
%*****************

We say that a set $S \subset \N$ is \emph{density intersective}
if, given any set of integers with positive upper density $A \subset \N$, we have $(A-A) \cap S \neq \emptyset$. The \emph{upper density} of a set $A \subseteq \N$ is defined as $\limsup_{n \to \infty} \frac{A(n)}{n}$ where $A(n) = |\left\{ i \in A : i \leq n \right\}|$. It is a result of K\v{r}\'{i}\v{z} \cite{K} that there are chromatically intersective sets which are not density intersective. See \cite[Theorem 1.2]{MC} for the statement of the theorem using this terminology.
In any case, we have the following analogue of Theorem \ref{chrom-acc}.

\begin{theorem}
Let $S \subset \N$ be a density intersective set and $A \subset \N$ any set with positive upper density. Then, the set $A$ contains arbitrarily long $S$-difference sequences $x_1,x_2,\cdots,x_k$, $x_i \in A$ for $1\leq i \leq k$ and $x_{i+1}-x_i \in S$ for $1\leq i \leq k-1$.
\end{theorem}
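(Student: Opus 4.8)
The plan is to argue by contradiction, in the same spirit as Theorem \ref{chrom-acc} but with the recoloring device replaced by a density pigeonhole. Suppose $A$ has positive upper density yet every $S$-difference sequence contained in $A$ has length at most $k$ for some finite $k$. For each $a \in A$ I would record the length of the longest tail ending at it: let $\ell(a)$ be the maximal $m$ for which there exists an $S$-difference sequence $x_1, \ldots, x_m = a$ with all $x_i \in A$ and $x_{i+1} - x_i \in S$. By the standing assumption $1 \le \ell(a) \le k$, so the level sets $A_j = \{\, a \in A : \ell(a) = j \,\}$ for $1 \le j \le k$ partition $A$ into \emph{finitely} many pieces.

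The next step is a pigeonhole on upper density. Writing $\overline{d}(B) = \limsup_{n \to \infty} B(n)/n$, and using that the $A_j$ are disjoint with $A(n) = \sum_{j=1}^{k} A_j(n)$, one gets $\overline{d}(A) \le \sum_{j=1}^{k} \overline{d}(A_j)$ because the $\limsup$ of a finite sum is at most the sum of the $\limsup$s. Since the left-hand side is positive and the sum is finite, at least one level set, say $A_{j_0}$, must itself have positive upper density.

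Now I would invoke the hypothesis that $S$ is density intersective, applied to the single set $A_{j_0}$: this yields $a, a' \in A_{j_0}$ with $a' - a \in S$, where we may take $a < a'$ since $S \subseteq \N$. The contradiction then falls out at once. Because $a \in A_{j_0}$ there is an $S$-difference sequence $x_1, \ldots, x_{j_0} = a$ in $A$; since $a' - a \in S$ and $a' \in A$, the extended tuple $x_1, \ldots, x_{j_0}, a'$ is again an $S$-difference sequence in $A$, now of length $j_0 + 1$. Hence $\ell(a') \ge j_0 + 1$, which contradicts $a' \in A_{j_0}$, i.e. $\ell(a') = j_0$. Since $k$ was arbitrary, $A$ must contain $S$-difference sequences of every length.

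I expect the only genuinely delicate point to be the conceptual one of choosing the right statistic on which to split $A$ --- namely the ``longest incoming $S$-path'' function $\ell$ --- because once this is in place both the density pigeonhole and the one-step extension argument are routine. The technical care needed is merely in the pigeonhole step, where finite subadditivity of upper density is what forces some $A_{j_0}$ to be nontrivially dense; crucially, this depends on there being only finitely many levels, which is exactly what the contradiction hypothesis supplies.
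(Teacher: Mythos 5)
Your proof is correct, and although it shares the paper's underlying strategy---stratify $A$ by a path-length statistic and pigeonhole on upper density---its logical architecture is genuinely different and simpler. The paper runs a minimal-counterexample descent: it defines the order of $x$ as the longest $S$-sequence \emph{starting} at $x$ inside $A$, takes the supremum $k$ of orders minimal over \emph{all} positive-density sets for which the theorem fails, splits $A$ into just two pieces (the top level $B$, of order exactly $k$, and $A \setminus B$), and notes that whichever piece has positive upper density is again a failing set with strictly smaller supremum of orders; density intersectivity enters only implicitly, to rule out the base case of supremum $1$. You instead fix $A$ once and for all, partition it into all $k$ level sets of the \emph{incoming} path-length $\ell$, extract a single dense level $A_{j_0}$ by finite subadditivity of upper density, and apply the density-intersective hypothesis directly to $A_{j_0}$ to extend a maximal path by one step, contradicting $\ell(a') = j_0$. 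This buys you two things: you never quantify over a class of failing sets (so no minimality device and no separate justification that $k \geq 2$), and the intersectivity hypothesis is invoked exactly once, precisely where the contradiction is produced. What the paper's version buys in exchange is structural parallelism with its proof of Theorem \ref{chrom-acc}, where the same minimality-over-colorings idea drives the recoloring of tails. Both arguments are valid; yours is the more self-contained of the two.
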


\begin{proof}
Suppose there is a set $A \subset \N$ of positive upper density such that the theorem does not hold.
We define the \emph{order in A} of $x \in \N$ as the length of the largest $S$-sequence with $x_1 = x$ and $x_i \in A$ (it can be infinity).
As the theorem does not hold for the set $A$ then the supremum of all orders in $A$ is finite, $k \geq 2$. We suppose this is the minimal $k$ over all such sets $A$.
Let $B$ be the set of elements in $A$ with order equal to $k$. In $B$ every integer has order less than or equal to $1$ and in $A \backslash B$ every integer has order less than or equal to $k-1$. One of these sets has positive upper density and the supremum of all its orders less than $k$ which contradicts the minimality of $k$.
\end{proof}

Recall that a set which contains arbitarily large difference sets, in particular the difference set of an infinite set,
is density intersective and, a fortiori, chromatically intersective by a simple pigeonhole argument.
Combining the results in this section we conclude that the difference sets in Theorem~\ref{construction} are accessible, chromatically and densitive intersective. Jungi\'{c} \cite{J1} has already shown that there exist chromatically intersective sets which are not ladders but his examples are not given explicitly.

\begin{question}
  Is there an intersective set which is not a $2$-ladder?
\end{question}

\section{Infinite monochromatic walks}

In this section we will consider infinite monochromatic $S$-difference sequences which we will call infinite $S$-walks. If we take $S=A-A$ with $A \subset N$ an infinite set, then given any finite coloring there will always exist infinite monochromatic $S$-walks. This is because one of the colors contains an infinite number of elements of the set $A$ whose elements define an infinite monochromatic $S$-walk.

Given a set $S \subset \N$ we define its \emph{order}, ${\rm ord}(S)$, to be the largest positive integer $r$ for which every $r$-coloring contains infinite monochromatic $S$-walks. If no such larger $r$ exists, then we say that ${\rm ord}(S)=\infty$ and that $S \subset \N$ is an \emph{infinitely walkable} set.

\begin{theorem}
If $S=\{n^2:n \in \N\}$ is the set of squares, then ${\rm ord}(S)\geq 2$.
\end{theorem}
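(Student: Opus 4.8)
The plan is to prove this by an elementary dichotomy that exploits crucially that there are only two colours, and in fact avoids any appeal to intersectivity. Write the colouring as $\N = A_1 \cup A_2$ and, for $x \in \N$, call $\{x+k^2 : k \geq 1\}$ the set of \emph{square-successors} of $x$. The whole argument rests on a single gadget: an infinite, strictly increasing sequence of squares whose consecutive differences are \emph{again} squares.

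First I would construct that gadget. Setting $k_0 = 3$ and $k_{i+1} = (k_i^2+1)/2$, the identity
\[ \left(\frac{k^2+1}{2}\right)^2 - k^2 = \left(\frac{k^2-1}{2}\right)^2 \]
shows that $k_{i+1}^2 - k_i^2$ is a perfect square, while a routine check confirms that the $k_i$ stay odd (so the recursion remains in $\N$) and strictly increasing. Hence for any fixed $x$ the points $x + k_i^2$ form an infinite $S$-walk lying entirely inside the square-successors of $x$.

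Next comes the dichotomy. Either (i) there is some $x$ all of whose square-successors receive the same colour, or (ii) every $x$ has square-successors of both colours. In case (i) the gadget immediately yields the infinite monochromatic walk $x + k_0^2, x + k_1^2, \ldots$ in that colour. In case (ii) I would build an infinite walk greedily in colour $1$: the class $A_1$ is nonempty (otherwise every square-successor set is monochromatic of colour $2$, which is case (i)), so pick $x_1 \in A_1$, and given $x_n \in A_1$ use (ii) to choose a square-successor $x_{n+1}$ of colour $1$; since a positive square is added at each step the sequence is infinite and strictly increasing. Either way we obtain an infinite monochromatic $S$-walk, whence ${\rm ord}(S) \geq 2$.

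The main obstacle is locating the gadget: everything hinges on producing an infinite chain of squares with square consecutive gaps, and the descent $k \mapsto (k^2+1)/2$ (a disguised Pythagorean parametrisation) is exactly what makes the ``monochromatic successors'' case collapse at once. I would also take care to record why the dichotomy is special to two colours — for $r \geq 3$ a point may simultaneously fail to have a colour-$1$ successor and fail to have monochromatic successors, which is precisely why the statement claims only ${\rm ord}(S) \geq 2$.
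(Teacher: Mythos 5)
Your proposal is correct and follows essentially the same route as the paper: the paper also splits into the case of a \emph{dead end} (a point all of whose square-successors get the other colour, hence a monochromatic successor set) versus the greedy case, and then uses the same Pythagorean gadget --- your recursion $k \mapsto (k^2+1)/2$ is exactly the paper's $z \mapsto z^2/4+1$ under the substitution $z = 2k$, producing an infinite chain of squares with square gaps that is translated into the monochromatic successor set. The only cosmetic difference is that the paper phrases the dichotomy as a proof by contradiction rather than a direct case split.
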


\begin{proof}
Suppose there was a $2$-coloring $\alpha: \N \to [1,2]$ avoiding infinite monochromatic $S$-walks. For this coloring there had to exist \emph{dead ends}: an integer $n \in \N$ such that $\alpha(n+k^2) \neq \alpha(n)$, for all $k \geq 1$. This implies that the infinite set $T=\{n+k^2:k \geq 1\}$ is necessarily monochromatic. This set is a translation of the set of squares.  
We will now construct an infinite $S$-walk on the set of squares.

Define $z_n$ by the recursion 
\[  z_1=6, \ \ z_{n+1}  =  \frac{z_n^2}{4} + 1. \]
By induction we see that  the terms of this sequence are integers satisfying $z_n \equiv 2 \bmod 4$.
We claim that the sequence $z_n^2$ is an infinite $S$-walk. Indeed, $z_{n+1}^2 - z_n^2 = \bigl(z_n^2-1\bigr)^2$.
We can now  translate this infinite walk to the set $T$ finishing the proof.
\end{proof}

\begin{theorem}
Let $S=\{s_1,s_2,\cdots\}$ be an infinitely walkable set. Then the following limit
\begin{equation*}
\liminf \limits_{i \rightarrow \infty} (s_{i+k}-s_i)
\end{equation*} is finite.
\end{theorem}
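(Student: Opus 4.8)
The plan is to prove the contrapositive: assuming that $\liminf_{i\to\infty}(s_{i+k}-s_i)=+\infty$ for the given $k$, I will produce a finite coloring of $\N$ that admits no infinite monochromatic $S$-walk, whence $\mathrm{ord}(S)<\infty$ and $S$ is not infinitely walkable. The first step is to unpack the hypothesis into a usable sparsity statement: $s_{i+k}-s_i\to\infty$ says exactly that for every length $L$ there is a threshold $T(L)$ such that every interval of length $L$ lying above $T(L)$ meets $S$ in at most $k$ points. Fixing a threshold $B$, I then split the step set as $S=F\sqcup S'$, where $F=S\cap[1,B)$ is a \emph{finite} set of ``small'' steps and $S'=S\cap[B,\infty)$ is a \emph{sparse} set of ``large'' steps.

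Next I would treat the two parts by different mechanisms. The small steps can be neutralised arithmetically: coloring $n$ by its residue modulo some $N>\max F$ forces $c(n+f)\ne c(n)$ for every $f\in F$, so any monochromatic walk is driven to use only large steps, and, once restricted to a fixed residue class, only those large steps divisible by $N$. The key structural remark is that this restricted step set is \emph{self-similar}: after division by $N$ it is again a subsequence of $(s_i)$, and a subsequence inherits the property $s_{i+k}-s_i\to\infty$ (if $s_{\varphi(i)}$ is a subsequence then $s_{\varphi(i+k)}-s_{\varphi(i)}\ge s_{\varphi(i)+k}-s_{\varphi(i)}\to\infty$). This invites a recursive, multi-scale coloring that peels off one more residue layer at each scale; meanwhile the sparsity of $S'$ guarantees that within each scale only at most $k$ distinct steps are available inside any bounded window, so a bounded palette of ``scale colors'' should suffice to obstruct the large jumps.

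To verify that the resulting coloring has no infinite monochromatic walk, I would note that such a walk tends to infinity and so makes infinitely many large jumps; tracking the color as a pair consisting of a residue part and a scale part, I would show that each admissible jump is forced to alter one of the two coordinates, so that no color class can support an infinite walk. The order-minimisation bookkeeping used in the proofs of Theorem~\ref{chrom-acc} and its density analogue is the natural template for converting ``every jump changes the color'' into ``every color class is walk-finite''.

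The hard part will be to keep the palette \emph{finite} while the scales are infinitely many, and this is genuinely delicate because the two natural mechanisms pull in opposite directions. For $S=\{2^n\}$ the jumps land exactly on powers-of-two boundaries, so no magnitude- or band-based coloring can trap a walk, and one is forced into a purely arithmetic coloring (for instance $n\bmod 3$, which works precisely because no power of two is divisible by $3$); whereas for $S=\{n^2\}$ residues are useless, since squares carry arbitrarily high prime powers, and one must instead exploit that the growing gaps between squares confine each jump to a sparse band of targets. Reconciling these into a single finite coloring valid for every gap-growing $S$ is the crux: I expect the successful construction to encode all scales simultaneously through one digit statistic in a suitably chosen (possibly mixed) base, in the spirit of the base-$m$ digit-count coloring of Theorem~\ref{construction}, with the main effort going into proving that this statistic cannot remain constant along an infinite walk.
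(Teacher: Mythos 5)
Your contrapositive framing and your sparsity reformulation (above a threshold, every window of bounded length contains at most $k$ elements of $S$) are both correct, and they are indeed where the paper starts. But the proposal stops exactly where the proof has to begin: the coloring is never constructed. You split $S$ into finitely many small steps plus a sparse tail, propose residue classes for the former and an unspecified ``multi-scale / digit statistic'' coloring for the latter, and then you observe yourself that neither mechanism works in general (residues fail for $S=\{n^2\}$ since squares can be divisible by any modulus, band colorings fail for $S=\{2^n\}$) and that reconciling them ``is the crux.'' A plan whose crux is explicitly left open is not a proof; moreover, the direction you lean toward (recursive residue layers, mixed-base digit statistics in the spirit of Theorem~\ref{construction}) is not the one that succeeds here --- the recursion adds a new layer of colors at every scale with no mechanism to keep the palette finite.

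The idea you are missing is simpler and needs neither residues nor digits. Partition $\N$ into finite intervals of rapidly growing lengths $l_1,l_2,\ldots$, colored alternately with two colors, choosing $l_j > s_{x_j}$ where $x_j$ is an index past which $s_{i+k}-s_i > l_1+\cdots+l_{j-1}$. Now take $x$ in the $(j+1)$-st interval and ask which earlier points $y$ of the same $2$-color satisfy $x-y\in S$. The alternation forces any such $y$ into the first $j-1$ intervals, so $x-y \ge l_j > s_{x_j}$; if the smallest such difference is $s_i$, then $s_{i+k}-s_i > l_1+\cdots+l_{j-1} \ge x-s_i$, hence $x-s_{i+k}<0$. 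So only $s_i,\ldots,s_{i+k-1}$ are usable steps: at most $k$ conflicting predecessors. This is precisely your sparsity statement, but deployed backward from $x$ rather than forward along $S$, and it lets one refine each of the two colors greedily into $k+1$ subcolors so that $x$ always avoids the subcolors of its at most $k$ predecessors. The resulting $(2k+2)$-coloring traps every monochromatic $S$-walk inside a single finite interval. Note the payoff: one uniform construction handles $\{2^n\}$ and $\{n^2\}$ alike --- the two examples your proposed mechanisms treat as irreconcilable opposites.
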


\begin{proof}
We will prove that if $\liminf (s_{i+k}-s_i)=\infty$ then there is a $(2k+2)$-coloring for the positive integers which avoids monochromatic infinite $S$-walks.
Consider the following $2$-coloring of the positive integers: partition the integers into finite intervals and color consecutive intervals with different colors. Moreover, if $l_j$ is the length of the $j$-th interval then $l_j > s_{x_j}$ where $x_j$ is such that $s_{i+k}-s_i > l_1 + \cdots + l_{j-1}$ for all $i \geq x_j$.
We then change this $2$-coloring into a $(2k+2)$-coloring in the following way: if $x$ belongs to the $(j+1)$-th interval there are at most $k$ elements of the first $j$ intervals that have the same color of $x$ in the original $2$-coloring and are the difference between $x$ and an element in $S$. That is because such elements are in the first $j-1$ intervals and the difference between $x$ and one of those elements is at least $l_j > s_{x_j}$. If the smallest of those differences is $s_i > l_j > s_{x_j}$ then $s_{i+k} - s_i > l_1 + \cdots + l_{j-1}$. As $x - s_i$ is in one of the first $j-1$ intervals then $x - s_{i+k} = (x - s_i) - (s_{i+k} - s_i) < (x - s_i) - (l_1 + \cdots + l_{j-1}) < 0$. As we use two sets of $k+1$ colors, one for each color in the original $2$-coloring, it is possible to choose a color for $x$ which is different from the colors of the elements in the first $j$ intervals already mentioned.

For this new coloring, every monochromatic $S$-sequence is contained in one finite interval, therefore is has finite length.
\end{proof}

\begin{corollary}
Let $S_k=\{n^k:n \in \N\}$ be the set of $k$-powers. Then, ${\rm ord}(S_k) \leq 3$.
\end{corollary}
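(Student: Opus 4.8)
The plan is to read the corollary off as an immediate application of the preceding theorem, the only genuine work being to verify its hypothesis for $k$-th powers and to keep track of the role played by the theorem's index-shift parameter. Unwinding the definition of ${\rm ord}$, the inequality ${\rm ord}(S_k)\le 3$ asserts precisely that \emph{not} every $4$-coloring of $\N$ contains an infinite monochromatic $S_k$-walk, i.e. that some $4$-coloring avoids all such walks. The preceding theorem manufactures exactly this kind of obstruction: whenever $\liminf_{i\to\infty}(s_{i+m}-s_i)=\infty$ for the increasing enumeration $S=\{s_1,s_2,\ldots\}$ and some fixed shift $m$, its proof produces a $(2m+2)$-coloring with no infinite monochromatic $S$-walk. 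To land at exactly $4$ colors I would apply it with the smallest value $m=1$, for which $2m+2=4$.

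The single hypothesis to check is therefore $\liminf_{i\to\infty}(s_{i+1}-s_i)=\infty$, that is, that the gaps between consecutive $k$-th powers diverge. Enumerating $S_k$ in increasing order gives $s_i=i^k$, so the consecutive gap is $(i+1)^k-i^k$. For $k\ge 2$ the binomial expansion yields $(i+1)^k-i^k\ge k\,i^{k-1}$, which tends to infinity with $i$; hence the $\liminf$ is $+\infty$, as required. I would also flag that the statement is meaningful only for $k\ge 2$: when $k=1$ one has $S_1=\N$, all consecutive gaps equal $1$, the hypothesis fails, and indeed $\N$ is infinitely walkable.

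With the hypothesis confirmed, I would invoke the preceding theorem with shift $m=1$ to obtain a $4$-coloring of $\N$ admitting no infinite monochromatic $S_k$-walk. Consequently $r=4$ is not a value for which every $r$-coloring contains an infinite walk; and since an $r'$-coloring with $r'\le r$ is in particular an $r$-coloring, the property ``every $r$-coloring contains an infinite monochromatic $S_k$-walk'' is downward closed in $r$. Because it fails at $r=4$, we conclude ${\rm ord}(S_k)\le 3$.

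I expect no real obstacle here, as all the substance resides in the preceding theorem. The only point requiring care is notational: one must not conflate the exponent $k$ defining $S_k$ with the index shift (written $m$ above) appearing in the theorem's $\liminf$, and must correctly observe that it is the choice $m=1$ — the fact that consecutive $k$-th powers already spread apart — that yields the sharp bound of $4$ colors, equivalently order at most $3$.
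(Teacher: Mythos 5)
Your proof is correct and is exactly the paper's (implicit) argument: the corollary is stated without proof, the intended derivation being precisely your application of the preceding theorem with shift parameter $1$, using that consecutive gaps $(i+1)^k-i^k\to\infty$ for $k\ge 2$ to obtain a $4$-coloring with no infinite monochromatic $S_k$-walk. You also rightly observe the one subtlety worth flagging: the quantitative bound $2m+2=4$ comes from the theorem's proof rather than its bare statement, and the case $k=1$ must be excluded.
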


\begin{problem}
What is the order for the set of squares? What is the order for the set of cubes?
\end{problem}

For the squares we know that $2 \leq {\rm ord}(S_2) \leq 3$. For the cubes we know $1 \leq {\rm ord}(S_3) \leq 3$.

\begin{question}
Is there a ladder set $L$ such that ${\rm ord}(L)=1$?
\end{question}

\begin{question}
Is it true that every infinitely walkable set $S \subset \N$ contains an infinite difference set $A-A \subset S$ with $A$ an infinite set of integers?
\end{question}

\section{The distance graph}

Given a set of positive integers $D$, we define the \emph{distance
graph} $G_D$ where $V(G)=\Z$ and $E(G)=\{(x,y) \in V\times V:|x-y|
\in D\}$. If $D \subset \N$ is chromatically intersective, then the associated
distance graph $G_D$ has infinite chromatic number. So Theorem \ref{chrom-acc} can be reformulated as follows:

If  $D \subset \N$ is such that $G_D$ has infinite chromatic number and $\alpha:V \to [1,r]$ is
a finite $r$-coloring of the vertices, then in  $G_D$
there exist arbitrarily long upwards going monochromatic paths.

This can be extended from distance graphs to arbitrary graphs.

\begin{theorem}
Let $G$ be a graph of infinite chromatic number and let  $\alpha:V \to [1,r]$ be
an $r$-coloring of the vertices. Given any ordering of $V$,
there exist arbitrarily long upwards going monochromatic paths.
\end{theorem}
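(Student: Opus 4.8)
The plan is to adapt the classical Gallai--Roy argument, using the given linear order to orient the edges of $G$ and using a potential (height) function to convert any bound on the path lengths into an explicit finite proper coloring, thereby contradicting $\chi(G) = \infty$. Concretely, I would argue by contradiction: suppose the upward-going monochromatic paths have bounded length, so that every such path has at most $k$ vertices, and aim to build a proper coloring of $G$ with finitely many colors.

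First I would orient every edge of $G$ from its smaller endpoint to its larger endpoint with respect to the given ordering of $V$. Since the ordering is linear, this orientation is acyclic, and in particular every upward-going path has distinct vertices automatically. For each vertex $v$ I would define $p(v)$ to be the maximum number of vertices in an upward-going monochromatic path ending at $v$; the singleton path shows $p(v) \geq 1$, and by the standing assumption $p(v) \in \{1, \dots, k\}$ for every $v$.

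Next I would consider the coloring $\gamma(v) = (\alpha(v), p(v))$, which uses at most $rk$ colors. The key step is to check that $\gamma$ is proper. Take any edge with endpoints $u < v$, so it is oriented $u \to v$. If $\alpha(u) = \alpha(v)$, then a longest upward-going monochromatic path ending at $u$ (which has $p(u)$ vertices, all of color $\alpha(u)$, and all at or below $u$ in the order) can be extended by appending $v$, producing an upward-going monochromatic path ending at $v$ with $p(u)+1$ vertices; hence $p(v) \geq p(u)+1 > p(u)$. Thus adjacent vertices never agree in both coordinates of $\gamma$, so $\gamma$ is proper and $\chi(G) \leq rk < \infty$, the desired contradiction.

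The argument is almost entirely routine once the height function $p$ is in place; the only point needing care is the extension step, where one must use that the path ending at $u$ lies entirely at or below $u$ to guarantee that appending the strictly larger vertex $v$ keeps the path simple and upward. I expect this bookkeeping, together with the observation that a finite bound $k$ and $r$ colors yield only finitely many $\gamma$-values, to be the main thing to state cleanly rather than a genuine obstacle, and I would note that this mirrors the tail-recoloring idea already used in the proof of Theorem~\ref{chrom-acc}.
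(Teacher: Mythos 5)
Your proof is correct and follows essentially the same route as the paper: both are the Gallai--Roy potential argument, defining a height function (the paper uses the longest monochromatic path \emph{starting from} a vertex, you use the one \emph{ending at} it, a trivial mirror image) and pairing it with $\alpha$ to produce a proper coloring with at most $r(k+1)$ colors, contradicting $\chi(G)=\infty$.
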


\begin{proof}
Assume the contrary, and let $k$ be the maximal lenght of  upwards going monochromatic paths. 
For a vertex $x$, let $\beta(x)$ be the  maximal lenght of  upwards going monochromatic paths starting from $x$ (with $\beta(x)=0$ if $x$
is not connected to any higher vertex). Put $\gamma(x)=\alpha(x)+d\beta(x)$. Then
 $\gamma:V \to [1,r(1+k)]$ is a coloring in which no two vertices of the same color are connected, a contradiction.
\end{proof}

 \begin{question}
   An ordering of the vertices is the same as making it a directed graph without directed cycles. Is the above result true for arbitrary directed graphs?
 \end{question}

\begin{theorem}
Let $D \subset \N$ and suppose that for any finite coloring of the vertices of $G_D$ there exists monochromatic copies of the finite graphs $G_1$ and $G_2$ in $G_D$. Then for any finite coloring of the vertices of $G_D$ there exists a monochromatic copy of the graph $G_1 \times G_2$.
\end{theorem}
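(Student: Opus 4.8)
The plan is to realize a copy of the product as a \emph{sumset} of a copy of $G_1$ and a copy of $G_2$, and then to force it to be monochromatic by a product--Ramsey reduction. The first thing I would check is a sumset embedding lemma: for \emph{any} copies $\phi\colon V(G_1)\to\Z$ and $\psi\colon V(G_2)\to\Z$ of $G_1$ and $G_2$ inside $G_D$ (so that every edge is sent to a pair whose difference lies in $D$), the map $(u,w)\mapsto\phi(u)+\psi(w)$ is a copy of the Cartesian product $G_1\times G_2$ in $G_D$. Indeed, an edge of $G_1\times G_2$ alters exactly one coordinate, so the corresponding image difference equals either $\phi(u)-\phi(u')$ or $\psi(w)-\psi(w')$, and each of these lies in $\pm D$ by hypothesis. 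Hence, given a colouring $\alpha\colon\Z\to[1,r]$, it suffices to produce copies $\phi,\psi$ for which the whole grid $\{\phi(u)+\psi(w)\}$ is $\alpha$--monochromatic.

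The heart of the argument is then a two--factor product reduction. First, by a standard compactness argument the infinite Ramsey hypothesis for $G_2$ yields a \emph{finite} window $Y\subset\Z$ such that every $r$--colouring of $Y$ already contains a monochromatic copy of $G_2$ with all vertices in $Y$; this is legitimate because every copy of the finite graph $G_2$ occupies only finitely many integers. The key move is to apply the Ramsey hypothesis for $G_1$ not to $\alpha$ itself but to the induced vector colouring
\[
A(x)=\bigl(\alpha(x+y)\bigr)_{y\in Y}\in[1,r]^{Y},
\]
which uses only $r^{|Y|}$ colours. A monochromatic copy $\phi$ of $G_1$ for $A$ produces a single pattern $\rho\colon Y\to[1,r]$ with $\alpha(\phi(u)+y)=\rho(y)$ for \emph{all} $u\in V(G_1)$ and all $y\in Y$; that is, sliding along the copy of $G_1$ never changes the colour of any template point. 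Since $\rho$ is now an honest $r$--colouring of the whole of $Y$, the choice of $Y$ furnishes a copy $\psi\colon V(G_2)\to Y$ with $\rho(\psi(w))=c$ constant, and then $\alpha(\phi(u)+\psi(w))=\rho(\psi(w))=c$ for every $u,w$, so the grid of the first step is monochromatic.

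The main obstacle, which this set--up is designed to circumvent, is monochromatizing both coordinate directions \emph{simultaneously}: a naive two--step argument that stabilises the $G_1$--direction against a \emph{single} fixed copy of $G_2$ and then tries to invoke the $G_2$--hypothesis fails, because it leaves only a colouring of one copy of $G_2$, which cannot be reduced any further. Building the colouring $A$ from the entire template $Y$ is exactly what fixes this: a monochromatic $G_1$ for $A$ forces whole ``rows'' to share one common pattern $\rho$, leaving a full $r$--colouring of $Y$ to which the $G_2$--hypothesis genuinely applies. Two smaller technical points I would still have to address are the appeal to compactness (needed to keep the number of colours of $A$ finite) and the injectivity of the sumset map, so that the image is a genuine copy rather than merely a homomorphic image; the latter amounts to ensuring that the difference sets of the two chosen copies meet only in $0$, which I would arrange by a suitable choice of the copies $\phi$ and $\psi$.
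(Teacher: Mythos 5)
Your construction is at heart the same pattern--colouring argument as the paper's, with the roles of the two factors swapped: the paper fixes (by the same compactness step you invoke, asserted rather than proved in both cases) a window length $N$ for $G_1$, colours each aligned block of $N$ consecutive integers by its $\alpha$-pattern, and finds a monochromatic copy of $G_2$ in the quotient graph $G_{D'}$ with $D'=\frac{N\N\cap D}{N}$; the grid is then read off exactly as your sumset of a within-block copy of $G_1$ and an across-block copy of $G_2$. Your use of overlapping translates $x+Y$ instead of aligned blocks buys you something real: you never need to introduce $D'$ or verify that $G_{D'}$ inherits the $G_2$-Ramsey property, since your second application is to an honest $r$-colouring $\rho$ of the fixed finite window $Y$.

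However, the injectivity point you defer is a genuine gap, and your proposed remedy --- ``a suitable choice of the copies $\phi$ and $\psi$'' --- does not work as stated. The Ramsey hypothesis is purely existential: applied to the colouring $A$ it hands you \emph{some} monochromatic copy of $G_1$, and you have no freedom to select one whose difference set avoids $(Y-Y)\setminus\{0\}$; nothing you have said rules out that \emph{every} monochromatic copy of $G_1$ for $A$ has a difference inside $Y-Y$, in which case the map $(u,w)\mapsto \phi(u)+\psi(w)$ collapses and yields only a homomorphic image, not a copy. The repair is to build the avoidance into the colouring rather than into the choice of copy: fix $M$ larger than the diameter of $Y$ and apply the $G_1$-hypothesis to the refined finite colouring $A'(x)=\bigl(A(x),\, x \bmod M\bigr)$. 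A monochromatic copy $\phi$ for $A'$ has all its vertices in a single residue class mod $M$, so every nonzero difference $\phi(u)-\phi(u')$ has absolute value at least $M$ and hence lies outside $(Y-Y)\setminus\{0\}$, making the sumset map injective. This is precisely the device the paper itself uses --- taking the product with the mod $N$ colouring --- which in the aligned-block formulation makes injectivity automatic because distinct vertices of the $G_2$-copy occupy disjoint blocks. With this one modification your proof is complete.
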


\begin{proof}
Let $\alpha:V \to [1,r]$ be an  $r$-coloring of the vertices of $G_D$. There is a positive integer $N$ such that every subset of $N$ consecutive integers in $V$ contains a monochromatic copy of $G_1$. Define a new coloring $\beta:\N \to [1,r]^N$ via \[ \beta(n+1) = \left(\alpha(nN+1), \alpha(nN+2), \cdots, \alpha(nN+N)\right). \]
Let $D' = \frac{N\N \cap D}{N}$. Note that because $G_D$ has infinite chromatic number, we have that $D' \neq \emptyset$. Then $G_{D'}$ also contains a monochromatic copy of $G_2$ for any finite coloring of its vertices. This is true because we take the product of any finite coloring with a $\bmod N$ coloring then all the differences between elements of the same color are in $N \Z$. In particular, $G_{D'}$ contains a monochromatic copy of $G_2$ for the coloring $\beta$. This corresponds to a monochromatic copy of $G_1 \times G_2$ in $G_D$.
\end{proof}

By combining the previous two theorems we get the following corollary.

\begin{corollary}
Let $D \subset \N$ such that $G_D$ has infinite chromatic number and
a finite $r$-coloring of the vertices $\alpha:V \to [1,r]$.
There exist $n$-dimensional monochromatic grids graphs with arbitrarily large side lengths.
\end{corollary}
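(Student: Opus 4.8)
The plan is to prove, by induction on $n$, the stronger statement that for every finite coloring of $V(G_D)$ and every $\ell \geq 1$ there is a monochromatic copy in $G_D$ of the $n$-dimensional grid $P_\ell^{\times n}$, by which I mean the Cartesian (box) product of $n$ copies of the path $P_\ell$ on $\ell$ vertices. This is exactly the $n$-dimensional grid with side length $\ell$, so establishing the statement for all $\ell$ yields grids of arbitrarily large side length. The crucial point is to keep the quantifier over \emph{all} finite colorings inside the inductive hypothesis, since the second of the two preceding theorems (the product theorem) requires monochromatic copies of each of its two factors for every finite coloring before it can conclude anything about their product.

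For the base case $n=1$ I would simply observe that the one-dimensional grid of side $\ell$ is the path $P_\ell$. Because $G_D$ has infinite chromatic number, the theorem on upwards-going monochromatic paths applies to every $r$-coloring and produces arbitrarily long monochromatic paths; hence for every finite coloring and every $\ell$ there is a monochromatic copy of $P_\ell$ in $G_D$.

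For the inductive step, fix $n \geq 2$ and $\ell$, and set $G_1 = P_\ell$ and $G_2 = P_\ell^{\times(n-1)}$. The base case guarantees a monochromatic copy of $G_1$ for every finite coloring, and the induction hypothesis guarantees a monochromatic copy of $G_2$ for every finite coloring; these are precisely the hypotheses of the product theorem. Applying it yields, for every finite coloring, a monochromatic copy of $G_1 \times G_2 = P_\ell \times P_\ell^{\times(n-1)} = P_\ell^{\times n}$, which completes the induction.

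I expect the only real subtlety to be bookkeeping rather than genuine difficulty: one must phrase the inductive claim with the universal quantifier over finite colorings already built in, and one must identify the product $G_1 \times G_2$ appearing in the second theorem with the box product, so that an iterated product of paths is recognised as the grid. Once the atomic building block, a monochromatic path supplied by the first theorem, and the closure under box products supplied by the second theorem are both in place, the corollary follows by a straightforward iteration, with no carries or density estimates to manage.
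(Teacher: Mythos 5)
Your proof is correct and is essentially the paper's own argument: the paper gives no written proof beyond the remark that the corollary follows ``by combining the previous two theorems,'' and your induction --- the path theorem supplying the base case $P_\ell$, the product theorem supplying the step $P_\ell \times P_\ell^{\times(n-1)}$, with the universal quantifier over finite colorings carried inside the inductive hypothesis --- is exactly that combination, just written out carefully.
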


\begin{question}
What else can we say about the structure of the distance graph $G_D$,
given that it has infinite chromatic number? In particular, which other
monochromatic subgraphs are forced in all finite colorings?
\end{question}

\section{Acknowledgements}

The authors would like to thank the referees for their helpful comments and suggestions.


\begin{thebibliography}{10}


\bibitem{BL}
V. Bergelson and A. Leibman, \emph{Polynomial extension of van der Waerden's and Szemer\'{e}di's theorems}, J. Amer. Math. Soc. 9 (1996) 725-753.

\bibitem{BGL}
T.C. Brown, R. L. Graham, and B. M. Landman, \emph{On the set of common differences in van der Waerden's theorem on arithmetic progressions}, Canad. Math. Bull. 42 (1999) 25-36.

\bibitem{J1}
V. Jungi\'{c}, \emph{On Brown's conjecture on Accessible Sets}, J. Combin. Theory Ser. A 110(1) (2005), 175-178.

\bibitem{J2}
V. Jungi\'{c}, \emph{Elementary, Topological and experimental approaches to the family of large sets}, PhD Thesis (1999).

\bibitem{K}
I. K\v{r}\'{i}\v{z}, \emph{Large independent sets in shift-invariant graphs. Solution of Bergelson's problem}, Graphs and Combinatorics 3 (1987), 145-158.

\bibitem{LR1}
B. M. Landman and A. Robertson, \emph{Ramsey Theory on the Integers}, Second edition. Student Mathematical Library, 73. American Mathematical Society, Providence, RI, 2014. 

\bibitem{LR2}
B. M. Landman and A. Robertson, \emph{Avoiding monochromatic sequences with special gaps},  SIAM J. Discrete Math.  21  (2007), no. 3, 794-801.

\bibitem{MC}
R. McCutcheon, {Three results in recurrence}, In Ergodic Theory and its Connections with Harmonic Analysis, Proceedings of the 1993 Alexandria Conference. Edited by Karl E. Petersen and Ibrahim Salama, London Mathematical Society Lecture Note Series (No. 205), Cambridge University Press, 1995, pp. 349-358

\bibitem{PS}
Y. Peres and W. Schlag, \emph{Two Erdos problems on lacunary sequences: Chromatic number and Diophantine approximation}, Bull. Lond. Math. Soc.  42  (2010),  no. 2, 295-300.

\bibitem{FLW}
N. Frantzikinakis, E. Lesigne, M. Wierdl, \emph{Sets of $k$-recurrence but not $(k+1)$-recurrence},  Ann. Inst. Fourier (Grenoble)  56  (2006),  no. 4, 839-849.

\bibitem{RTV}
I.Z. Ruzsa, Zs. Tuza, M. Voigt, \emph{Distance Graphs with Finite Chromatic Number}, J. Combin.
Theory Ser. B 85 (1) (2002), 181-187.

\bibitem{W}
M. Walters, \emph{Combinatorial proofs of the polynomial van der Waerden theorem and the polynomial Hales-Jewett theorem}, J. London Math. Soc. (2) 61 (2000), no. 1, 1-12.



\end{thebibliography}
\end{document}